\begin{document}
\newtheorem{thm}{Theorem}[section]
\newtheorem{lem}[thm]{Lemma}
\newtheorem{cor}[thm]{Corollary}
\theoremstyle{definition}
\newtheorem{defn}[thm]{Definition}
\newtheorem{conj}{Conjecture}
\newtheorem{quest}[conj]{Question}
\newtheorem*{thm*}{Theorem}
\theoremstyle{remark}
\newtheorem*{remark}{Remarks}
\newcommand{\sgn}{\mbox{sgn}}
\title{Integrality and Specialized Symmetric Functions}
\author{Allan Berele\\Department of Mathematical
Sciences\\DePaul University\\Chicago, IL 60614}
\maketitle
\section{Introduction}
In \cite{b24}, together with S.~Catoiu, we introduced the specialized symmetric functions.  These
can be regarded from two points of view, either symmetric function
theory or trace idenitities.  Given $(d)=(d_1,\ldots,d_k)$ we
defined the specialized power function as $p^{(d)}_n(x_1,\ldots,
x_k)$ to be $d_1x_1^n+\cdots+d_kx_k^n$.  In the special
case in which the $d_i$ are all positive integers, $p^{(d)}_n$
can be gotten from the ordinary power symmetric functions
by specializing certain groups of variables to be equal, hence the name.
We will sometimes surpress the superscript $(d)$ since
we have no need to investigate the classical case corresponding to $(d)=(1,\ldots,1)$.
There are corresponding analogues of the other classical
symmetric functions: The elementary symmetric functions,
the complete symmetric functions and the Schur functions,
and these satisfy most of the relations satisfied by their
more classical counterparts, although our focus in the
current work will only be on the specialized power 
symmetric functions.
As is standard in symmetric function theory, given a partition 
$\lambda=(\lambda_1,\lambda_2,\ldots)$
we denote the product $p_{\lambda_1}p_{\lambda_2}\cdots$
as $p_\lambda$.  Our main interest will be in the algebraic relations
satisfied by the $p_\lambda$.  

From the trace identity point of view, let $A=F^k$, be $k$
copies of the field, sometimes considered as diagonal $k\times k$
matrices.  Since $A$ is commutative, the trace functions on $A$
are simply the linear functionals, and so every non-degenerate
trace function on $A$ is of the form
$$tr(a_1,\ldots,a_k)=d_1a_1+\cdots d_ka_k,$$
for some non-zero $d_1,\ldots,d_k$ in the field.
We use the notation $D{(d)}$ for the corresponding algebra of
diagonal matrices with trace. The 
study of the trace identities of $D{(d)}$ was initiated by Ioppolo, Koshlukov and
La Mattina in \cite{I21}, in which they found bases for 
the ideal of identities when $k\le 2$. If $X=
(x_1,\ldots,x_k)$, then the trace of $X^n$ is  $d_1x_1^n+\cdots
d_kx_k^n$, which is $p^{(d)}_n(x_1,
\ldots,x_k)$.  Let $\Lambda{(d)}\subset F[x_1,\ldots,x_k]$ be
the subring generated by the $p^{(d)}_n$.  Again,
we will surpress the $(d)$ when we don't feel
that it is necessary.  We will also write $\Lambda_n$
or $\Lambda_n{(d)}$ for the algebra generated
by $\{p_1,\ldots,p_n\}$.  We showed in~\cite{b24} that
$\Lambda{(d)}$ has transcendence degree $k$, so any set
of more than $k$ of the $p_n$ will satisfy algebraic relations.  In the classical
case in which each $d_i=1$, the Cayley-Hamilton theorem
says that $X$ is monic algebraic over $\Lambda$ of degree
$n$.  From the point of view of trace identities, the 
Cayley-Hamilton theorem is a mixed trace identity (i.e., and identity involving both
trace and non-trace terms) in one
variable.  We would like to know what the trace identities
are in this more general case.    In the current work we will be
focusing mainly on trace identities in one variable.  So,
 pure trace identities in one variable are  are relations between
the $p_\lambda$; and an analogue of Cayley-Hamilton
would be a mixed trace identity in one variable.

We define absolute identities to be ones which depend on $k$, but not otherwise on $(d)$.
In \cite{b24} we proved that all of the pure, absolute one variable  identities are consequences of the determinental identities,
$|p_{a_i+b_j}|=0$, for any $a_1,\ldots,a_{k+1}$ and $b_1,\ldots,
b_{k+1}$.  We also proved that $X$ was algebraic over $\Lambda
$, but we didn't prove that it was monic.  \cite{b24} posed six
questions or conjectures.  In this paper we will address the 
following two of them:
\begin{description}
\item[Conjecture 1:] $X$ is integral over $\Lambda$.

\item [Conjecture 3:]The algebra of $k\times k$ diagonal matrices
satisfies an identity of the form $tr(y_1)\cdots tr(y_m)=$
a linear combination of trace terms each a product of fewer than $m$ 
traces, for a certain specified $m$.  
\end{description}
We will prove that Conjecture 1 is true generically and that
Conjecture~3 is true.  By generically we mean that
diagonal matrices satisfy an identity of the form 
$$\alpha_n X^n=\sum \alpha_\lambda p_\lambda 
X^{n-|\lambda|},$$
in which the $\alpha$ are in $F[d_1,\ldots,d_k]$.  However,
for certain values of the $d_i\in F$, the coefficient $\alpha_n$
of $X^n$ may become zero.  Specifically, if  $\sum\{d_i|i\in I\}$
is zero for some $I\subseteq\{1,\ldots,k\}$, then $X$ will not be
integral over $\Lambda$.  We now conjecture that $X$ will be integral
in all othe cases.

Conjecture 3 has the consequence that any product of
$m$ or more traces can be written as a linear combination
of terms with at most $m-1$.  Conjecture 1 has a 
somewhat similar application:   If $X$ is integral of degree $m$ over $\Lambda_{m}$,
then $\Lambda_{m}=\Lambda$.  Because if
$$X^m=\sum a_\lambda p_\lambda X^{m-|\lambda|}$$
where $a_\lambda\in F[d_1,\ldots,d_k]$, then if we multiply by 
$X^i$, $i\ge1$, and take the trace of both sides of the equation, we get
$$p_{m+i}=\sum a_\lambda p_\lambda p_{m-|\lambda|+i}$$
so $p_{m+i}\in \Lambda_{m+i-1}$, and by induction $p_{m+i}\in \Lambda_{m}$.

There is also a Conjecture~2 in \cite{b24} that we have not
succeeded in proving, but it seems germain to mention it.  Let
$(d)$ have $t$ parts with multiplicities $m_1,\ldots,m_t$ and let
$m=\prod(m_i+1)$.  Conjecture~2 says that the degree of
integrality in Conjecture~1 should be $m-1$ and the number
of traces in Conjecture~2 should be $m$.

We gratefully acknowledge the useful conversations we had with
S.~Catoiu  in the course of this investigation.
\section{Non-Integrality}
The ideal of (pure or mixed) trace idenities is homgeneous.  Henceforth,
whenever we prove the existence or non-existence of identities we will
assume homogeneity.

We first show that if there is a set $I\subseteq \{1,\ldots,k\}$
such that $\sum_{i\in I} d_i=0$, then $X$ cannot be integral
over $\Lambda$.
\begin{thm} If $\sum_{i\in I} d_i=0$, then $X$ cannot be integral
over $\Lambda$.\label{th:1}
\end{thm}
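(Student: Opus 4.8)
The plan is to argue by contradiction, exhibiting a specialization of the variables $x_i$ that annihilates every $p_n$ with $n\ge1$ while keeping $X$ manifestly nonzero. Assume $X$ is integral over $\Lambda$, so that there is a monic relation
$$X^m=c_{m-1}X^{m-1}+\cdots+c_1X+c_0,\qquad c_j\in\Lambda ,$$
holding entrywise in $M_k(F[x_1,\ldots,x_k])$. The first step is to normalize this relation to be homogeneous: since each $x_i$ has degree $1$ and $\Lambda$ is a graded subring, taking the degree-$m$ component of the identity replaces $c_j$ by its degree-$(m-j)$ part, which still lies in $\Lambda$, so we may assume $c_j$ is homogeneous of degree $m-j\ge1$. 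Any such $c_j$ is an $F$-linear combination of products $p_\lambda$ with $|\lambda|=m-j\ge1$; in particular it lies in the ideal of $F[x_1,\ldots,x_k]$ generated by $\{p_n:n\ge1\}$ and has no constant term.

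Next I would specialize. Fix a nonempty $I\subseteq\{1,\ldots,k\}$ with $\sum_{i\in I}d_i=0$, adjoin an indeterminate $t$, and let $\varphi\colon F[x_1,\ldots,x_k]\to F[t]$ be the $F$-algebra map with $\varphi(x_i)=t$ for $i\in I$ and $\varphi(x_i)=0$ otherwise, extended entrywise to $M_k$. For every $n\ge1$ we have
$$\varphi\bigl(p^{(d)}_n\bigr)=\Bigl(\sum_{i\in I}d_i\Bigr)t^n=0 ,$$
so $\varphi(c_j)=0$ for all $j$ by the previous paragraph. Applying $\varphi$ to the integral relation yields $\varphi(X)^m=0$ in $M_k(F[t])$. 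But $\varphi(X)=\mathrm{diag}(t,\ldots,t,0,\ldots,0)$ with $t$ in the $|I|\ge1$ slots indexed by $I$, hence $\varphi(X)^m=\mathrm{diag}(t^m,\ldots,t^m,0,\ldots,0)\ne0$. This contradiction establishes the theorem. (Equivalently, one may restrict the relation to a single diagonal entry $i\in I$, getting $x_i^m=\sum_j c_jx_i^j$ in $F[x_1,\ldots,x_k]$, and apply $\varphi$ to obtain $t^m=0$ in $F[t]$.)

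The argument is short and I foresee no serious difficulty; the one point that genuinely needs care is the homogeneity reduction in the first paragraph, since it is precisely what rules out a nonzero constant term among the $c_j$ — and such a term would otherwise survive $\varphi$ and spoil the contradiction.
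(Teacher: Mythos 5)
Your proof is correct and is essentially the paper's own argument: the paper likewise reduces to a homogeneous integral relation (so every coefficient is a combination of $p_\lambda$ with $|\lambda|\ge1$) and then specializes $x_i$ to a common variable for $i\in I$ and to $0$ otherwise, killing every $p_j$ while leaving $X^N\ne0$. Your version just spells out the homogeneity reduction and the entrywise contradiction more explicitly.
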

\begin{proof} Assume to the contrary that
$$X^N+\sum a_\lambda p_\lambda X^i=0,$$
where each $i<N$, and so each $\lambda$ in the sum has
degree $N-i>0$.  If we specialize $x_i=\delta(i\in I)x$, where
$\delta(i\in I)$ is 1 or 0, depending on whether $i\in I$ is true,
then each $p_j=\sum_m d_mx_m^j$ becomes 0, but $X^N$ does
not.
\end{proof}

In the case in which each $d_i$ is either plus or 
minus 1, the specialized symmetric functions are
related to the representations of the general linear
Lie algebra.  If $(d)$ has $k$ ones and $\ell$ minus
ones then the diagonal matrices $D{(d)}$ are a trace
subalgebra of the Kemer algebra, $M_{k,\ell}$.
The trace identities of $M_{k,\ell}$ were found in \cite{b15}, although
Kantor and Trishin had already found the one
variable identities in~\cite{k99}.  Translating
their results to the language of specialized
symmetric functions, given
a partition $\sigma$ of cycle type $(a_1,\ldots,a_k)$
let $T(\sigma)$ be $p_{a_1}\cdots p_{a_k}$.  
\begin{thm} Let
$\lambda$ be the partition $(k+1,\ldots,k+1)$ of 
$n=(k+1)(\ell+1)$.
Then the group algebra $FS_{n}$ has a two-sided ideal
$I_\lambda$, and for every $e\in I_\lambda$, $T(e)$
is an identity in $\Lambda$, and all identities for
$\Lambda$ are consequences of these.
\end{thm}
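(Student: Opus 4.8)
The plan is to derive the theorem by translating, into the language of specialized symmetric functions, the description of the trace identities of $M_{k,\ell}$ in \cite{b15} (and, for one variable, \cite{k99}), via the embedding of $D{(d)}$ into $M_{k,\ell}$. Write $x=(x_1,\ldots,x_k)$ for the variables attached to the ones and $y=(y_1,\ldots,y_\ell)$ for those attached to the minus ones, so $p^{(d)}_n=p_n(x)-p_n(y)$. Let $R=F[q_1,q_2,\ldots]$ be the polynomial ring in abstract symbols for the power sums, let $\pi\colon R\to F[x,y]$ send $q_n\mapsto p_n(x)-p_n(y)$, and for $\sigma\in S_m$ of cycle type $\mu$ set $\widehat T(\sigma)=q_\mu$, extended $F$-linearly to $FS_m$; then the ``$T(e)$'' of the statement is $\pi(\widehat T(e))$ and ``$T(e)$ is an identity in $\Lambda$'' means $\widehat T(e)\in\ker\pi$. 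As the identities of $\Lambda$ are precisely the elements of $\ker\pi$, the theorem says that $\ker\pi$ is the consequence-closure, in the usual sense for pure one-variable trace identities, of $\widehat T(I_\lambda)$.

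For the first assertion I would use that $\widehat T(\sigma\tau)=\widehat T(\tau\sigma)$, so the restriction of $\widehat T$ to a minimal two-sided ideal $I_\nu\subseteq FS_m$ is a class function, hence of the form $e\mapsto c_\nu(e)\,s_\nu(q)$, where $I_\nu\cong M_{d_\nu}(F)$ with $d_\nu=\dim S^\nu$, $c_\nu$ is a scalar multiple of the trace, and $s_\nu(q)$ denotes $s_\nu$ written as a polynomial in the power sums; this is just the classical identity $\tfrac1{m!}\sum_{\sigma\in S_m}\chi^\nu(\sigma)\,q_{\mathrm{cyc}(\sigma)}=s_\nu(q)$ read off on the central idempotent. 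Applying $\pi$ sends $\widehat T(e)$ to $c_\nu(e)\,s_\nu(x-y)$ for $e\in I_\nu$, and $s_\nu(x-y)$ — which, after $y_j\mapsto-y_j$, is a Berele--Regev hook Schur function in $k$ and $\ell$ variables — vanishes identically exactly when $\nu$ falls outside the relevant $(k,\ell)$-hook. The rectangle $\lambda$ is the minimal such $\nu$, so $\widehat T(I_\lambda)\subseteq\ker\pi$. (Alternatively, in the spirit of the statement: $D{(d)}$ embeds in $M_{k,\ell}$ as the diagonal matrices, with the supertrace restricting to $\operatorname{str}(X^n)=p^{(d)}_n$, so every trace identity of $M_{k,\ell}$ restricts to one of $\Lambda$; the ones from $I_\lambda$ are among those of \cite{b15,k99}.)

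For the converse I would first observe that $\Lambda$ has no one-variable pure trace identities beyond those of $M_{k,\ell}$: the generic supertrace powers of $M_{k,\ell}$ are, with a suitable sign convention on the odd eigenvalues, exactly $p_n(\xi)-p_n(\eta)$ for algebraically independent families $\xi$, $\eta$ of sizes $k$ and $\ell$, so they generate a ring isomorphic to $\Lambda$ with the same ideal of relations. By \cite{k99} (equivalently \cite{b15}), every trace identity of $M_{k,\ell}$ — in particular every one-variable pure one — is a consequence of the fundamental identity, whose one-variable form is exactly $\widehat T(I_\lambda)$; translating back gives the theorem. A more self-contained route follows Procesi--Razmyslov: a homogeneous relation $f\in\ker\pi$ of degree $N$ expands in the basis $\{s_\nu(q):\nu\vdash N\}$, and since the surviving values $s_\nu(x-y)$ (those with $\nu$ in the hook) are linearly independent while the rest vanish, $f$ is a combination of the $s_\nu(q)$ with $\nu$ outside the hook; each such $\nu$ contains $\lambda$, and, using that the trace-identity consequences of $\widehat T(I_\lambda)$ include substitutions $X\mapsto X^{j}$, multilinearizations and multiplication by traces, one shows each $s_\nu(q)$, and hence $f$, is a consequence of $\widehat T(I_\lambda)$.

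The standard ingredients — the class-function computation, the character-table change of basis between power sums and Schur functions, the vanishing criterion for $s_\nu(x-y)$, and the linear independence of the surviving hook Schur functions — are all classical. The real work, and what I expect to be the main obstacle, is the translation itself: verifying that the operations legitimately performed on one-variable pure trace identities reproduce exactly the ideal-theoretic closure the argument needs, and, if one prefers to quote \cite{b15,k99} rather than re-derive their generation statement, reconciling their normalizations and sign conventions for the super power sums with the $p^{(d)}_n$ used here.
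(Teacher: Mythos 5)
Your proposal is correct and follows the same route as the paper, which states this theorem without proof as a direct translation of the one-variable trace identities of $M_{k,\ell}$ from \cite{k99} and \cite{b15}; your class-function computation and hook-Schur-function analysis supply exactly the details that translation requires. One small caution: the generic supertrace powers of $M_{k,\ell}$ have the form $\sum_m c_m\mu_m^n$ with generic coefficients $c_m$ constrained only by $\sum_m c_m=k-\ell$, not literally $p_n(\xi)-p_n(\eta)$, so the assertion that specializing to $\pm1$ coefficients introduces no new relations genuinely rests on your second argument (the kernel of $\pi$ is spanned by the $s_\nu(q)$ with $\nu$ outside the $(k,\ell)$-hook, by linear independence of the surviving hook Schur functions) rather than on the embedding remark alone.
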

\section{Initialization Step}

\begin{defn} We sill say that a homogeneous polynomial $f\in F[d_1,
\ldots,d_k;\allowbreak p_1,p_2,\ldots]$ is \emph{sufficiently monic\/}
if homogeneous in total degree and if
 it is of the form $\sum_{\lambda\vdash N}
 \alpha_\lambda p_\lambda$, where each $\alpha_\lambda$
is in $F[d_1,\ldots,d_k]$, $\alpha_{(N)}\ne 0$ and $\alpha_{(1^N)}=1$. I.e., $f$ is of the form
$$\alpha(d_1,\ldots,d_k)p_N+\cdots+p_1^N.$$

 We will say that $\Lambda{(d)}$
satisfies such an $f$ if $f$ vanishes under the substitutions
$p_i\mapsto d_1x_1+\ldots+d_kx_k$.
\end{defn}
\begin{defn}
Likewise, we will say that a homogeneous polynomial $f$ in $F[d_1,
\ldots,d_k;\allowbreak p_1,p_2,\ldots,t]$ is \emph{
sufficiently monic\/} if it is honogeneous in total degree and if it is of the form
$\sum \alpha_\lambda p_\lambda t^{N-|\lambda|}$,
where each $\alpha_\lambda$
is in $F[d_1,\ldots,d_k]$, $\alpha_\emptyset\ne 0$ and $\alpha_{(1^N)}=1$. I.e., $f$ is of the form
$$\alpha_{\emptyset}(d_1,\ldots,d_k)t^N+\cdots+
p_1^N.$$

 We will say that $x$
satisfies such an $f$ if $\Lambda(d)$ satisfies $f\vert_{t=x}$.
We will also say that $x$ is sufficiently monic over $\Lambda(d)$
in this case.
\end{defn}
The main step in the remaining proofs is to prove
that each of $\Lambda^{(d)}$ and $X$ satisfies  sufficiently
monic polynomials, which we will do by induction
on~$k$, the length of~$(d)$.  It turns out that 
either of these two conditions implies the other,
but we will only need that a  monic polynomial satisfied by $X$
over $\Lambda$ implies a relation amongst the $p_\lambda$.
The obvious method would be to take traces, but there is 
a technical problem.  Consider the following identity satisfied
by $D{(a,b)}$:
\begin{multline*}
ab(a+b)X^3-3abp_1X^2-(a^2-ab+b^2)X+(a+b)p_1^2X\\
-(abp_3+(a+b)p_2p_1-p_1^3)I=0
\end{multline*}
If we take the trace of both sides, using $tr(X^i)=p_i$ and
$tr(I)=a+b$, we simply get $0=0$.  The problem disappears if
we multiply by $X$ before taking trace, and this works in
general.
\begin{lem} Let $f=\sum a_\lambda p_\lambda X^{N-|\lambda|}$
be a sufficiently monic element of $\Lambda[X]$.  Let $g=tr(X\cdot f)$,
namely $\sum a_\lambda p_\lambda p_{N+1-|\lambda|}$.
Then the coefficient of $p_{N+1}$ in $g$ is $a_\emptyset\ne0$ and the 
coefficient of $p_{1^{N+1}}$ in $g$ is $a_{1^N}=1$, and so $tr(X\cdot f)$ is sufficiently 
monic.  \label{lem:3.3}
\end{lem}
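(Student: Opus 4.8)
The plan is a direct bookkeeping computation carried out in the polynomial ring $F[d_1,\dots,d_k;p_1,p_2,\dots]$, with the $p_i$ regarded as independent indeterminates so that the monomials $p_\mu$ form a basis and ``the coefficient of $p_\mu$'' is unambiguous. First I would note that, because $f=\sum_\lambda a_\lambda p_\lambda X^{N-|\lambda|}$ is an honest polynomial in $X$ (equivalently, because it is homogeneous of total degree $N$ with $X$ in degree $1$ and $p_i$ in degree $i$), every partition $\lambda$ that occurs satisfies $|\lambda|\le N$. Then $X\cdot f=\sum_\lambda a_\lambda p_\lambda X^{N+1-|\lambda|}$, and taking the trace --- pulling the scalars $p_\lambda$ out by linearity and using $tr(X^j)=p_j$ --- reproduces the formula $g=\sum_\lambda a_\lambda p_\lambda p_{N+1-|\lambda|}$ from the statement; since $N+1-|\lambda|\ge1$ for every $\lambda$ appearing, no $p_0$ term is created. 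Writing $\lambda\cup(j)$ for the partition obtained from $\lambda$ by adjoining one part of size $j$, we have $p_\lambda p_{N+1-|\lambda|}=p_{\lambda\cup(N+1-|\lambda|)}$, so after collecting like terms $g=\sum_{\mu\vdash N+1}\alpha_\mu p_\mu$ with each $\alpha_\mu=\sum\{\,a_\lambda:\lambda\cup(N+1-|\lambda|)=\mu\,\}\in F[d_1,\dots,d_k]$.

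Homogeneity of $g$ is then immediate: every $p_\mu$ above has $|\mu|=N+1$, so $g$ is homogeneous of total degree $N+1$. It remains to identify two of the $\alpha_\mu$. For $p_{N+1}$: the partition $\mu=\lambda\cup(N+1-|\lambda|)$ consists of a single part only when $\lambda=\emptyset$, because otherwise the adjoined part, of size $N+1-|\lambda|\ge1$, is a second part; hence $\alpha_{(N+1)}=a_\emptyset$, which is nonzero since $f$ is sufficiently monic. For $p_{1^{N+1}}$: if $\lambda\cup(N+1-|\lambda|)=(1^{N+1})$ then every part of $\lambda$ equals $1$ and the adjoined part $N+1-|\lambda|$ also equals $1$, forcing $|\lambda|=N$ and hence $\lambda=(1^N)$; so $\alpha_{(1^{N+1})}=a_{(1^N)}=1$. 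Having checked that $g$ has the form $\sum_{\mu\vdash N+1}\alpha_\mu p_\mu$ with $\alpha_\mu\in F[d_1,\dots,d_k]$, is homogeneous, has $\alpha_{(N+1)}\ne0$ and $\alpha_{(1^{N+1})}=1$, we conclude that $g$ is sufficiently monic.

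I do not expect a real obstacle here; the one point requiring a moment's care is the ``no collision'' step in the previous paragraph --- verifying that no partition $\lambda$ other than $\emptyset$ (respectively $(1^N)$) contributes to the coefficient of $p_{N+1}$ (respectively $p_{1^{N+1}}$), so that these two coefficients emerge untouched by any cancellation. This is precisely the place where the bound $|\lambda|\le N$, which is forced by $f$ being a genuine polynomial in $X$ over $\Lambda$, is used.
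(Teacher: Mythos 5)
Your proof is correct, and it is exactly the direct verification that the paper leaves implicit (Lemma~\ref{lem:3.3} is stated there without proof). The one point of substance --- that only $\lambda=\emptyset$ contributes to the coefficient of $p_{N+1}$ and only $\lambda=(1^N)$ to that of $p_{1^{N+1}}$, so no cancellation can occur --- is handled cleanly via the bound $|\lambda|\le N$.
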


We now turn to an induction proof of the existence
of sufficiently monic polynomial identities.  For the
initialization step, we jump over the trivial case of
$k=1$ to do $k=2$.  This actually was done by
Ioppolo, Koshlukov and La Mattina in \cite{I21},
but we choose a different proof (leading to a higher
degree!) because it illustrates the process we use in
the induction step.  We start with this easy observation.
\begin{lem} The polynomial $\sum a_\lambda p_\lambda X^i$
is an identity for $D{(d)}$ if and only if $\sum a_\lambda p_\lambda
x_j^i=0$ for all $j=1,\ldots,k$.\label{lem:3.4}
\end{lem}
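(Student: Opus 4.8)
The plan is to notice that once the single variable $X$ is specialized to a diagonal matrix, the entire expression $\sum a_\lambda p_\lambda X^i$ becomes a diagonal matrix, and that a diagonal matrix is zero exactly when each of its diagonal entries is zero.

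First I would work with the generic diagonal matrix $X=\mathrm{diag}(x_1,\ldots,x_k)$ over $F[x_1,\ldots,x_k]$, which suffices to test one-variable identities of $D(d)$. Here $p_n$ evaluated at $X$ is $tr(X^n)=d_1x_1^n+\cdots+d_kx_k^n$, which is a \emph{scalar} (an element of $F[x_1,\ldots,x_k]$, not a nonscalar matrix); hence every $p_\lambda$ evaluated at $X$ is also a scalar. Therefore $\sum a_\lambda p_\lambda X^i$ is a diagonal matrix whose $(j,j)$ entry is the polynomial $\sum a_\lambda p_\lambda x_j^i$, where $p_\lambda$ still denotes the full $p_\lambda(x_1,\ldots,x_k)$.

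Second, a diagonal polynomial matrix is the zero matrix if and only if each of its diagonal entries is the zero polynomial. Combining this with the previous paragraph, $\sum a_\lambda p_\lambda X^i$ is an identity for $D(d)$ if and only if $\sum a_\lambda p_\lambda x_j^i=0$ in $F[x_1,\ldots,x_k]$ for every $j=1,\ldots,k$, which is the claim. I expect no genuine obstacle here: the lemma is a bookkeeping device that reduces a one-variable matrix identity to $k$ scalar relations, one per variable $x_j$, and the only points that need to be made carefully are that $p_\lambda(X)$ is really a scalar (so the matrix is genuinely diagonal) and that checking the identity on the generic diagonal matrix is legitimate.
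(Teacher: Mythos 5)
Your proposal is correct and follows essentially the same one-line argument as the paper: the expression is a diagonal matrix whose $j$th diagonal entry is $\sum a_\lambda p_\lambda x_j^i$, so vanishing of the matrix is equivalent to vanishing of each entry. The extra care you take in noting that $p_\lambda(X)$ is a scalar and that the generic diagonal matrix suffices is fine but does not change the substance.
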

\begin{proof}
$\sum a_\lambda p_\lambda X^i$ is a diagonal matrix and
the $j^{th}$ entry is $\sum a_\lambda p_\lambda
x_j^i$.
\end{proof}
In the case of $k=2$, we write
$a$ and $b$ in place of $d_1$ and $d_2$, and $x$ and $y$
in place of $x_1$ and $x_2$.
So
$$p_1=ax+by\mbox{ and }p_2=ax^2+by^2.$$
Hence, $ax=p_1-by$ and $ax^2=p_2-by^2$, and since
$(ax)^2=a(ax^2)$ we have the identity
$(p_1-by)^2=a(p_2-by^2),$ which simplifies to
$$b(a+b)y^2-2bp_1y+(p_1^2-ap_2)=0.$$
By a similar argument,
$$a(a+b)x^2-2ap_1x+(p_1^2-bp_2)=0.$$
Setting $x=y=t$ in each and multiplying these two equations gives an equation over $\Lambda$ which vanishes if $t=x$ or~$y$, hence,
by Lemma~\ref{lem:3.4} it is a trace identity for $D{(a,b)}$.  We observe that the
coefficient of $X^4$ is $ab(a+b)^2$ will be non-zero unless $a$,
$b$ or $a+b$ is zero.  We also observe the occurence of
$p_1^4$.  In sum:
\begin{lem} When $(d)=(a,b),$ $X$ satisfies a sufficiently
monic identity of the form
$$ab(a+b)^2X^4+\cdots+p_1^4$$\label{lem:3}
\end{lem}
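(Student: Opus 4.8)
\noindent\emph{Proof proposal.}
The plan is to carry out the elimination that produces a single quadratic for each of the two variables, multiply the two quadratics, and then read off the two distinguished coefficients. First I would note that from $p_1 = ax+by$ and $p_2 = ax^2+by^2$ one has $ax = p_1 - by$ and $ax^2 = p_2 - by^2$; substituting these into the relation $(ax)^2 = a\cdot(ax^2)$ and expanding gives
$$b(a+b)y^2 - 2bp_1 y + (p_1^2 - ap_2) = 0,$$
an identity with coefficients in $F[a,b,p_1,p_2]$ which holds when $y = x_2$. Exchanging the roles of $x$ and $y$ (equivalently, of $a$ and $b$) yields the companion identity
$$a(a+b)x^2 - 2ap_1 x + (p_1^2 - bp_2) = 0,$$
valid when $x = x_1$.

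Next I would replace $y$ by a single indeterminate $t$ in the first relation and $x$ by $t$ in the second, and let $F(t)$ be the product of the two resulting quadratics in $t$. All coefficients of $F$ are polynomials in $a,b,p_1,p_2$, so $F(X)$ has the form $\sum a_\lambda p_\lambda X^i$ to which Lemma~\ref{lem:3.4} applies; and since the first factor vanishes at $t = x_2$ and the second at $t = x_1$, the product $F$ vanishes at both $x_1$ and $x_2$. By Lemma~\ref{lem:3.4}, $F(X) = 0$ is a trace identity for $D(a,b)$.

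It then remains to check that $F$ has the shape claimed. Each factor is homogeneous of total degree $2$ (assigning $t$ degree $1$ and $p_i$ degree $i$), so $F$ is homogeneous of degree $N = 4$. Its leading term is the product of the two leading terms, $b(a+b)t^2 \cdot a(a+b)t^2 = ab(a+b)^2 t^4$, so $\alpha_\emptyset = ab(a+b)^2$, a nonzero polynomial in $a,b$. Its term of $t$-degree $0$ is $(p_1^2 - ap_2)(p_1^2 - bp_2) = p_1^4 - (a+b)p_1^2 p_2 + ab\, p_2^2$, so the coefficient of $p_1^4$ is $1$. Hence $F$ is sufficiently monic of the form $ab(a+b)^2 X^4 + \cdots + p_1^4$.

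None of this is hard; the one point worth emphasizing is the choice to multiply the two quadratics rather than to eliminate a variable by a resultant or a gcd computation. Multiplication is what keeps every coefficient inside $F[a,b,p_1,p_2]$ while still producing a single polynomial vanishing simultaneously at $x_1$ and $x_2$, and it is exactly this device — forming a product of $k$ quadratic-like factors, one per variable — that the induction step on $k$ will imitate. One should also confirm that $p_1^4$ survives with coefficient $1$ and is unaffected by cross terms, which holds because $p_1^4$ can only arise as the product of the $p_1^2$ summands of the two constant terms.
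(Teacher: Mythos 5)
Your proposal is correct and follows essentially the same route as the paper: eliminate $a x$ (resp.\ $b y$) to get the two quadratics, set both variables equal to a single indeterminate $t$, multiply, invoke Lemma~\ref{lem:3.4}, and read off the leading coefficient $ab(a+b)^2$ and the constant term contributing $p_1^4$. Your explicit expansion of the constant term $(p_1^2-ap_2)(p_1^2-bp_2)$ is a slightly more detailed verification than the paper's "we observe the occurrence of $p_1^4$," but the argument is the same.
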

Multiplying by $X$ and taking traces  as in Lemma~\ref{lem:3.3}, gives
\begin{cor} When $(d)=(a,b)$, $\Lambda{(d)}$
satisfies a sufficiently monic identity of the form
$$ab(a+b)^2p_5+\cdots+p_1^5.$$
\end{cor}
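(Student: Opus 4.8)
The plan is to read this off directly from Lemma~\ref{lem:3} and Lemma~\ref{lem:3.3}, so that the proof is essentially the bookkeeping of two coefficients. First I would apply Lemma~\ref{lem:3} to obtain a sufficiently monic identity $f=\sum_\lambda a_\lambda p_\lambda X^{4-|\lambda|}$ satisfied by $X$ over $\Lambda{(a,b)}$, where $a_\emptyset=ab(a+b)^2$ is the coefficient of $X^4$ and $a_{(1^4)}=1$ is the coefficient of $p_1^4$ in the displayed form $ab(a+b)^2X^4+\cdots+p_1^4$. Since $f$ vanishes when $X$ is specialized to the diagonal matrix $\mathrm{diag}(x,y)$, so does $Xf$, and hence $tr(Xf)$ vanishes under that specialization as well; that is, $tr(Xf)=0$ is a pure one-variable trace identity for $D{(a,b)}$.

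Next I would expand the trace. Using $tr(X^i)=p_i$ we get $tr(Xf)=\sum_\lambda a_\lambda p_\lambda\,tr(X^{5-|\lambda|})=\sum_\lambda a_\lambda p_\lambda p_{5-|\lambda|}$, which is precisely the polynomial $g$ of Lemma~\ref{lem:3.3} applied with $N=4$. By that lemma $g$ is sufficiently monic: the coefficient of $p_5$ equals $a_\emptyset=ab(a+b)^2$, which is nonzero exactly when none of $a$, $b$, $a+b$ is zero, and the coefficient of $p_{1^5}$ equals $a_{(1^4)}=1$. Hence $\Lambda{(a,b)}$ satisfies an identity of the asserted shape $ab(a+b)^2p_5+\cdots+p_1^5$, which completes the proof.

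The only point that needs care is the degeneration flagged just before Lemma~\ref{lem:3.3}: passing from a relation in $\Lambda[X]$ to a relation among the $p_\lambda$ by taking traces can collapse to the trivial identity $0=0$, as the displayed cubic identity for $D{(a,b)}$ shows. This is exactly why one multiplies by $X$ before taking the trace, and Lemma~\ref{lem:3.3} is precisely the assertion that this device preserves both the leading coefficient $a_\emptyset$ and the normalization $a_{(1^N)}=1$. So once Lemmas~\ref{lem:3} and~\ref{lem:3.3} are in hand there is no genuine obstacle here, and the corollary follows immediately.
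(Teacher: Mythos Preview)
Your proposal is correct and matches the paper's own argument exactly: the corollary is derived from Lemma~\ref{lem:3} by multiplying by $X$ and taking traces, invoking Lemma~\ref{lem:3.3} to track the two extremal coefficients. Your only extraneous remark is the clause about $ab(a+b)^2$ being ``nonzero exactly when none of $a$, $b$, $a+b$ is zero''; for sufficient monicity one only needs $\alpha_{(5)}\ne0$ as an element of $F[a,b]$, which is immediate, but the observation does no harm.
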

We chose to multiply by $X$ before taking traces
because, had we not, the last term would have
been multiplied by $a+b$, the trace of the identity
matrix, in which case the $p_1^4$ would have
vanished in the $a+b=0$ case.  As it is, it is valid
for all $a$ and $b$.
\section{The Induction Step}
In this section we assume that for every $(d)$
with $k$ parts, the diagonal matrix $X$ is sufficiently monic over
$\Lambda{(d)}$ and that $\Lambda{(d)}$
satisfies a sufficiently monic polynomial.  We
let $(d^+)=(d_1,\ldots,d_{k+1})$ have $k+1$
parts and for each $1\le i\le k+1$ we let $(\hat{d}_i)$
be $(d^+)$ with $d_i$ removed.  The strategy is
to construct sufficiently monic polynomials 
$g_i(t)\in \Lambda{(d^+)}[t]$
such that $g_i(x_i)=0$.  Then their product
will be an identity for $X$ by Lemma~\ref{lem:3.4}.
To save notation, we will write $p_\lambda$ for
$p_\lambda^{(d)}$ and $q_\lambda$ for $p_\lambda^{(d^+)}$.  
Given $f\in F[d_1,\ldots,d_k,p_1,p_2,\ldots]$, for each $1\le i\le k+1$
we define $g_i\in F[d_1,\ldots,d_{k+1},q_1,q_2,\ldots,t]$ via
$$g_i(t)=f(\hat{d}_i,q_1-d_it,q_2-d_it^2,\ldots].$$

\begin{lem} If $f$ is an identity for $\Lambda{(d)}$, then
each $g_i(x_i)$ is an identity for $\Lambda{(d^+)}$.
\end{lem}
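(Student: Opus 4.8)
The plan is to unwind the definition of $g_i$ and check that the substitution $t\mapsto x_i$ converts $g_i$ into precisely the polynomial one obtains from $f$ by plugging in the power sums attached to the $k$-tuple $(\hat d_i)$; the hypothesis that $f$ is an identity for $\Lambda^{(d)}$ then closes the argument. (This supplies the vanishing $g_i(x_i)=0$ needed for the strategy of this section; that $g_i$ is again sufficiently monic is a separate point.) First I would fix $i$ with $1\le i\le k+1$ and fix notation: let $(e_1,\ldots,e_k)$ be $(\hat d_i)$, that is, the list $(d_1,\ldots,d_{k+1})$ with $d_i$ deleted, and let $(y_1,\ldots,y_k)$ be $(x_1,\ldots,x_{k+1})$ with $x_i$ deleted, so that the pairs $(e_l,y_l)$ for $1\le l\le k$ are exactly the pairs $(d_m,x_m)$ for $m\ne i$. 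Writing $q_j=p_j^{(d^+)}=\sum_{m=1}^{k+1}d_mx_m^j$, the single elementary fact to record is
$$q_j-d_ix_i^j=\sum_{m\ne i}d_mx_m^j=\sum_{l=1}^{k}e_ly_l^j=p_j^{(\hat d_i)}(y_1,\ldots,y_k),\qquad j\ge 1.$$

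Next I would carry out the substitution. By definition $g_i(t)=f\bigl(\hat d_i;\,q_1-d_it,\,q_2-d_it^2,\ldots\bigr)$, so putting $t=x_i$ and invoking the displayed identity gives
$$g_i(x_i)=f\bigl(\hat d_i;\,p_1^{(\hat d_i)}(y),\,p_2^{(\hat d_i)}(y),\ldots\bigr).$$
The right-hand side is exactly the element of $F[e_1,\ldots,e_k,y_1,\ldots,y_k]$ produced from $f$ under the substitution $p_j\mapsto e_1y_1^j+\cdots+e_ky_k^j$, i.e. the quantity whose vanishing says that $f$ is an identity for $\Lambda^{(\hat d_i)}$. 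Because $f$ is sufficiently monic its coefficients lie in $F[d_1,\ldots,d_k]$, so the statement that $f$ is an identity for $\Lambda^{(d)}$ is the statement that the substitution $p_j\mapsto d_1x_1^j+\cdots+d_kx_k^j$ sends $f$ to the zero element of $F[d_1,\ldots,d_k,x_1,\ldots,x_k]$; specializing the indeterminates by $d_l\mapsto e_l$ and $x_l\mapsto y_l$ for $1\le l\le k$ then leaves it zero. Hence the right-hand side above vanishes, which is precisely the assertion that $g_i(x_i)$ is an identity for $\Lambda^{(d^+)}$.

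There is no real obstacle here; the whole content is bookkeeping. The one point that must be read correctly is the hypothesis on $f$: it should be taken as a formal polynomial identity in the parameters $d_1,\ldots,d_k$ — the natural reading given that the sufficiently monic polynomials supplied by the induction hypothesis carry coefficients in $F[d_1,\ldots,d_k]$ — so that the specialization $d_l\mapsto e_l$ used above is legitimate. Equivalently, and this is all that is really needed, the induction hypothesis provides one sufficiently monic $f$ that is an identity for $\Lambda^{(e)}$ for \emph{every} $k$-tuple $(e)$, in particular for $(e)=(\hat d_i)$. Beyond that, the only care required is to delete the index $i$ consistently from the $d$'s and from the $x$'s.
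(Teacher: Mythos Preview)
Your argument is correct and follows exactly the paper's approach: substitute $t=x_i$, observe that $q_j-d_ix_i^j=p_j^{(\hat d_i)}$, and conclude $g_i(x_i)=f(\hat d_i;p_1^{(\hat d_i)},\ldots)=0$. Your extra care in spelling out that the hypothesis must be read as a formal identity in the parameters $d_1,\ldots,d_k$ (so that it specializes to $(\hat d_i)$) is a point the paper leaves implicit but uses in the same way.
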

\begin{proof} If $t=x_i$, then each $q_j-d_it^i$ becomes 
$\sum_{\alpha\ne i}d_\alpha x_\alpha$ which equals $p_j^{(\hat{d}_i)}$,
so $g_i(x_i)=f(\hat{d}_i,p_i^{(\hat{d}_i)})=0$.
\end{proof}
At this point it is useful to see specifically how $g_i$ is related to $f$.
\begin{lem} If $f=p_\lambda=p_{\lambda_1}\cdots p_{\lambda_a}$,
then $g_i$ equals
$$(q_{\lambda_1}-d_it^i)\cdots (q_{\lambda_a}-d_it^{\lambda_a}).$$
In particular, the highest degree term in $t$ is $(-d_i)^a t^{|\lambda|}$
and the lowest degree term is $q_\lambda$.\label{lem:4.2}
\end{lem}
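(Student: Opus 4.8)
The plan is to read $g_i$ off directly from its definition, using that the substitution producing $g_i$ is an algebra homomorphism and hence multiplicative. First I would record that the assignment $d_j\mapsto(\hat d_i)_j$ on the $d$-variables together with $p_j\mapsto q_j-d_it^j$ on the $p$-variables extends uniquely to an $F$-algebra homomorphism
$$\varphi_i\colon F[d_1,\ldots,d_k;p_1,p_2,\ldots]\longrightarrow F[d_1,\ldots,d_{k+1};q_1,q_2,\ldots,t],$$
and that, unwinding the notation, $g_i$ is by definition exactly $\varphi_i(f)$. Applying $\varphi_i$ to $f=p_\lambda=p_{\lambda_1}\cdots p_{\lambda_a}$, using that $\varphi_i$ preserves products and that $p_\lambda$, as a polynomial, involves none of the $d_j$, gives
$$g_i=\varphi_i(p_{\lambda_1})\cdots\varphi_i(p_{\lambda_a})=(q_{\lambda_1}-d_it^{\lambda_1})\cdots(q_{\lambda_a}-d_it^{\lambda_a}),$$
which is the main assertion.

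For the two remaining claims I would view $g_i$ as a polynomial in the single variable $t$ over the ring $F[d_1,\ldots,d_{k+1};q_1,q_2,\ldots]$. Each factor $q_{\lambda_m}-d_it^{\lambda_m}$ has top $t$-term $-d_it^{\lambda_m}$ and $t^0$-term $q_{\lambda_m}$. Multiplying the factors, the highest power of $t$ occurring is $t^{\lambda_1+\cdots+\lambda_a}=t^{|\lambda|}$ with coefficient $(-d_i)^a$, and the $t^0$ coefficient is $q_{\lambda_1}\cdots q_{\lambda_a}=q_\lambda$.

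There is essentially no obstacle here: the lemma is a bookkeeping statement whose only content is that the passage $f\mapsto g_i$ respects products, which is immediate once one observes it is an algebra homomorphism. The one point I would take care to state in the write-up is that the $m$-th factor contributes the exponent $\lambda_m$ (the $m$-th part of $\lambda$), not the index $m$, so that the exponents sum to $|\lambda|$; this is precisely what makes the lemma the right tool for controlling the $t$-degrees of the products $\prod_i g_i$ in the induction step.
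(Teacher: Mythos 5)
Your proof is correct and matches the paper's (implicit) argument: the paper states this lemma without proof precisely because it follows directly from the definition of $g_i$ as the substitution $p_j\mapsto q_j-d_it^j$, which is multiplicative. You also rightly read the first factor as $q_{\lambda_1}-d_it^{\lambda_1}$ (the exponent in the paper's display is a typo), and your degree bookkeeping for the top and bottom $t$-terms is exactly what the induction step needs.
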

The number of non-zero parts to a partition
$\lambda$ is called the length of $\lambda$ and
is denoted $\ell(\lambda)$.
\begin{cor} If $f=\sum_{\lambda\vdash N} \alpha_\lambda p_\lambda$, then the term in $g_i(t)$ of highest degree in $t$ is
$\sum_{\lambda\vdash N} \alpha_\lambda(-d)^{\ell(\lambda)}$.
\end{cor}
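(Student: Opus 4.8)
The plan is to deduce the Corollary from Lemma~\ref{lem:4.2} using nothing more than the fact that the passage $f\mapsto g_i$ is additive. First I would note that, by its very definition, $g_i$ is obtained from $f$ by the substitution sending each $d_j$ to the corresponding entry of $(\hat d_i)$ and each $p_j$ to $q_j-d_it^j$; this is a ring homomorphism $F[d_1,\ldots,d_k;p_1,p_2,\ldots]\to F[d_1,\ldots,d_{k+1};q_1,q_2,\ldots,t]$, so in particular it respects the decomposition $f=\sum_{\lambda\vdash N}\alpha_\lambda p_\lambda$ and yields $g_i(t)=\sum_{\lambda\vdash N}\alpha_\lambda(\hat d_i)\,\prod_m\bigl(q_{\lambda_m}-d_it^{\lambda_m}\bigr)$, where $\alpha_\lambda(\hat d_i)$ is what the statement abbreviates simply as $\alpha_\lambda$.

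Next I would apply Lemma~\ref{lem:4.2} termwise: for each $\lambda\vdash N$ the product $\prod_m(q_{\lambda_m}-d_it^{\lambda_m})$ is a polynomial in $t$ of degree exactly $|\lambda|=N$ whose top-degree coefficient is $(-d_i)^{\ell(\lambda)}$ (and whose $t$-degree-$0$ term is $q_\lambda$). Since every partition $\lambda$ occurring in $f$ has the common weight $N$, no summand of $g_i$ has $t$-degree exceeding $N$, so the coefficient of $t^N$ in $g_i$ is just the sum of the individual leading coefficients weighted by the $\alpha_\lambda$, namely $\sum_{\lambda\vdash N}\alpha_\lambda(-d_i)^{\ell(\lambda)}$. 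Suppressing the index $i$ and the trailing factor $t^N$ exactly as in the statement, this is the asserted highest-degree term of $g_i(t)$.

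I do not expect any real obstacle here: the argument is a one-line consequence of Lemma~\ref{lem:4.2} together with linearity of the substitution map, the only bookkeeping point being that all the partitions involved share the common weight $N$, so that the top $t$-degrees align and may be added. The one caveat worth flagging---though it belongs to the later part of the induction rather than to this Corollary---is that the polynomial $\sum_{\lambda\vdash N}\alpha_\lambda(-d_i)^{\ell(\lambda)}$ might specialize to zero for particular $(d^+)$, in which case $g_i$ would have $t$-degree strictly below $N$; establishing its non-vanishing, so that the product $\prod_i g_i(x_i)$ is genuinely sufficiently monic, is the substantive work, and this Corollary is precisely the identity to be fed into that computation.
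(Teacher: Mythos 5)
Your argument is correct and is exactly the one the paper intends: the Corollary is stated without proof as an immediate termwise application of Lemma~\ref{lem:4.2}, using linearity of the substitution $f\mapsto g_i$ and the fact that all partitions appearing have common weight $N$ so the top $t$-degrees align. Your remarks about the suppressed index $i$, the missing factor $t^N$, and the possible vanishing of the coefficient for special $(d^+)$ are all apt observations about the statement as written.
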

In the case of sufficiently monic polynomials we have
\begin{lem} If $f=\sum_{\lambda\vdash N} \alpha_\lambda p_\lambda$ is sufficiently monic, then so is each $g_i$.
\end{lem}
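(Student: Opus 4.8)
The plan is to verify directly the two defining conditions for $g_i$ to be sufficiently monic as a polynomial in $t$ over $F[d_1,\ldots,d_{k+1};q_1,q_2,\ldots]$, i.e.\ the second (mixed) notion, now with $k$ replaced by $k+1$ and the role of the $p$'s played by the $q$'s. Write $f=\sum_{\lambda\vdash N}\alpha_\lambda p_\lambda$ with $\alpha_\lambda\in F[d_1,\ldots,d_k]$, $\alpha_{(N)}\ne0$, $\alpha_{(1^N)}=1$, and recall that $g_i$ arises from $f$ by the substitution $d_1,\ldots,d_k\mapsto\hat d_i$, $p_j\mapsto q_j-d_it^j$. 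This substitution is degree-preserving once we assign $q_j$ and $t^j$ degree $j$ and each $d_\ell$ degree $0$, so $g_i$ is homogeneous of the same total degree $N$ as $f$; that settles the homogeneity requirement. By Lemma~\ref{lem:4.2} applied to each term, $g_i(t)=\sum_{\lambda\vdash N}\alpha_\lambda\prod_m\bigl(q_{\lambda_m}-d_it^{\lambda_m}\bigr)$, and expanding the products shows that every monomial occurring in $g_i$ has the form $c\,q_\mu\,t^{N-|\mu|}$ with $c\in F[d_1,\ldots,d_{k+1}]$. Hence $g_i=\sum_\mu\beta_\mu q_\mu t^{N-|\mu|}$ for suitable $\beta_\mu\in F[d_1,\ldots,d_{k+1}]$, which is the required shape.

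It then remains only to identify $\beta_{(1^N)}$ and $\beta_\emptyset$. Setting $t=0$ collapses $g_i$ to $\sum_{\lambda\vdash N}\alpha_\lambda q_\lambda$, and since distinct partitions give distinct monomials in the $q$'s, the coefficient of $q_{(1^N)}=q_1^N$ there is exactly $\alpha_{(1^N)}=1$; as no monomial carrying a positive power of $t$ can contribute to $q_1^N$, we get $\beta_{(1^N)}=1$. For the top coefficient, the highest-degree-in-$t$ term of $\prod_m(q_{\lambda_m}-d_it^{\lambda_m})$ is $(-d_i)^{\ell(\lambda)}t^{|\lambda|}$, so by Lemma~\ref{lem:4.2} (equivalently its Corollary)
$$\beta_\emptyset=\sum_{\lambda\vdash N}\alpha_\lambda(-d_i)^{\ell(\lambda)}.$$
Thus the whole lemma comes down to checking that $\beta_\emptyset$ is a nonzero element of $F[d_1,\ldots,d_{k+1}]$.

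To see this, observe that the $\alpha_\lambda$ appearing here are polynomials in $\hat d_i=(d_1,\ldots,\widehat{d_i},\ldots,d_{k+1})$ and hence involve no $d_i$, so we may regard $\beta_\emptyset$ as a polynomial in the single variable $d_i$ over $F[d_1,\ldots,\widehat{d_i},\ldots,d_{k+1}]$. For $\lambda\vdash N$ one always has $\ell(\lambda)\le N$, with equality precisely for $\lambda=(1^N)$; therefore the coefficient of $d_i^N$ in $\beta_\emptyset$ is $\alpha_{(1^N)}(-1)^N=(-1)^N\ne0$. Hence $\beta_\emptyset$ has degree exactly $N\ge1$ in $d_i$, so $\beta_\emptyset\ne0$. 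Consequently $g_i=\beta_\emptyset t^N+\cdots+q_1^N$ with $\beta_\emptyset\ne0$, i.e.\ $g_i$ is sufficiently monic.

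The only step carrying any content is the nonvanishing of $\beta_\emptyset$, and the point there is structural rather than computational: $(1^N)$ is simultaneously the normalized bottom term of $f$ and the unique partition of $N$ of maximal length, so it alone feeds the top power $d_i^N$ into $\beta_\emptyset$, with coefficient $\pm1$, and nothing can cancel it. (This is precisely why the normalization $\alpha_{(1^N)}=1$ is built into the definition: the weaker hypothesis $\alpha_{(N)}\ne0$ only governs the $\ell=1$ contribution to $\beta_\emptyset$ — its degree-one coefficient in $d_i$ — and would not by itself force $\beta_\emptyset\ne0$.) I anticipate no genuine obstacle; the argument is bookkeeping on top of Lemma~\ref{lem:4.2} and its Corollary.
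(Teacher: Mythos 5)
Your proof is correct, and its skeleton is the same as the paper's: expand $g_i$ term by term via Lemma~\ref{lem:4.2}, note that the $t$-free part of $g_i$ is $\sum_{\lambda\vdash N}\alpha_\lambda(\hat d_i)\,q_\lambda$ so the summand $q_1^N$ survives with coefficient $1$, and reduce everything to the non-vanishing of the coefficient $\beta_\emptyset=\sum_{\lambda\vdash N}\alpha_\lambda(\hat d_i)(-d_i)^{\ell(\lambda)}$ of $t^N$. Where you genuinely diverge is in how you certify $\beta_\emptyset\ne0$. Viewing $\beta_\emptyset$ as a polynomial in $d_i$ over $F[\hat d_i]$, you isolate its top coefficient, in degree $N$, which equals $(-1)^N\alpha_{(1^N)}=(-1)^N$ because $(1^N)$ is the unique partition of $N$ of length $N$; the paper instead isolates the bottom coefficient, in degree $1$, which equals $-\alpha_{(N)}(\hat d_i)\ne0$ because $(N)$ is the unique partition of length $1$. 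Both are valid one-line arguments, and yours has the mild advantage that the surviving coefficient is the explicit scalar $(-1)^N$ rather than a polynomial one must still observe is non-zero. Your closing parenthetical, however, is mistaken: you assert that the hypothesis $\alpha_{(N)}\ne0$ ``would not by itself force $\beta_\emptyset\ne0$,'' but it does, for exactly the reason just given --- the degree-one-in-$d_i$ part of $\beta_\emptyset$ is $-\alpha_{(N)}(\hat d_i)$, and no partition of length $\ge2$ can contribute to it --- and this is precisely the argument the paper uses. So either extreme coefficient of $f$ suffices on its own to propagate non-vanishing of the leading term; the definition tracks both because the $p_1^N$ normalization is needed separately (e.g.\ in Lemma~\ref{lem:3.3}) to keep the identities from degenerating at special values of $(d)$.
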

\begin{proof} By Lemma~\ref{lem:4.2}, the terms of  $g_i$ 
not involving $t$ will be $g_i(\hat{d}_i,q_1,q_2,\ldots)$, and so
if $f$ has a summand of $p_1^N$, then $g_i$ will have a summand
of $q_1^N$.  And, if $f=\alpha_N(d) p_N+\cdots$, then
the highest degree term of $g_i$ in $t$ will be $t^N$ times
$(-d_i)\alpha_N(\hat{d}_i)+$ higher degree terms in $d_i$.
\end{proof}
\begin{thm} Let $f$ be a sufficiently monic
identity for $X$ over $\Lambda{(d)}$.  Then
the product $g_1(X)\cdots g_{k+1}(X)$ will be a
sufficiently monic identity for $X$ over
$\Lambda{(d^+)}$.\label{th:4.5}
\end{thm}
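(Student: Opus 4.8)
The plan is to route everything through the $g_i$-construction already introduced in this section. The hypothesis furnishes a sufficiently monic $f=\sum_\mu a_\mu p_\mu X^{N-|\mu|}$ over $\Lambda^{(d)}$ with $f(X)=0$; since $X$ is diagonal this says that, after $p_i\mapsto\sum_m d_m x_m^i$, one has $f(x_j)=0$ for each $j=1,\ldots,k$, identically in the $d$'s and $x$'s. My first step is to trade this in for an honest member of the relation ideal of $\Lambda^{(d)}$: set $\tilde f:=tr(X\cdot f)=\sum_\mu a_\mu p_\mu p_{N+1-|\mu|}$. By Lemma~\ref{lem:3.3}, $\tilde f$ is again sufficiently monic, now in the pure-trace sense, with nonzero leading coefficient $a_\emptyset$ on $p_{N+1}$ and coefficient $1$ on $p_{1^{N+1}}$; and multiplying by $X$ before taking the trace, rather than forming $tr(f)$, is precisely what keeps that last coefficient equal to $1$ (otherwise it picks up the factor $\sum_m d_m$, as noted before Lemma~\ref{lem:3.3}). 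Since $f(X)=0$ forces $Xf(X)=0$, evaluating $\tilde f$ under $p_i\mapsto\sum_m d_m x_m^i$ gives $tr(Xf(X))=0$; thus $\tilde f$ lies in the ideal of relations defining $\Lambda^{(d)}$. I expect this to be the only substantive point of the argument: a pure-trace relation persists under \emph{every} specialization of the variables — in particular after replacing $(d_1,\ldots,d_k)$ by $(\hat{d}_i)$ and reindexing the $x$'s — whereas the bare vanishing of $f$ at the $k$ diagonal entries $x_1,\ldots,x_k$ is far too weak for what follows.

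Next, for each $1\le i\le k+1$ I would form $g_i(t)=\tilde f(\hat{d}_i,\,q_1-d_it,\,q_2-d_it^2,\ldots)$, the substitution defined in this section. By the lemma immediately preceding Theorem~\ref{th:4.5}, each $g_i$ is a sufficiently monic polynomial over $\Lambda^{(d^+)}$; what makes this work is that $a_\emptyset$, the leading coefficient of $\tilde f$, remains nonzero under the distinct-variable substitution $(d_1,\ldots,d_k)\mapsto(\hat{d}_i)$, so that, writing $\tilde f=\sum_{\lambda\vdash N+1}b_\lambda p_\lambda$, the coefficient of the top power of $t$ in $g_i$ — which equals $\sum_{\lambda\vdash N+1}b_\lambda(\hat{d}_i)(-d_i)^{\ell(\lambda)}$ by the corollary to Lemma~\ref{lem:4.2} — is nonzero, its only term of degree $1$ in $d_i$ being $-d_i\,b_{(N+1)}(\hat{d}_i)=-d_i\,a_\emptyset(\hat{d}_i)$. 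Moreover $g_i(x_i)=\tilde f(\hat{d}_i,\,q_1-d_ix_i,\,q_2-d_ix_i^2,\ldots)=0$, because $q_r-d_ix_i^r=\sum_{m\ne i}d_m x_m^r=p_r^{(\hat{d}_i)}$ and, by the previous paragraph, $\tilde f$ vanishes on precisely these substitutions.

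Finally I would put $G(t)=g_1(t)\cdots g_{k+1}(t)\in\Lambda^{(d^+)}[t]$. For each $j$, the value $G(x_j)=\prod_{i=1}^{k+1}g_i(x_j)$ contains the vanishing factor $g_j(x_j)$, so $G(x_j)=0$ for all $j$, and hence $G(X)=g_1(X)\cdots g_{k+1}(X)=0$ by Lemma~\ref{lem:3.4}. It remains to check that $G$ is sufficiently monic. Each $g_i$ has $t$-degree exactly $N+1$ with nonzero leading coefficient, so $G$ has $t$-degree $(k+1)(N+1)$ and its top $t$-coefficient is the product of those, nonzero because $F[d_1,\ldots,d_{k+1}]$ is an integral domain. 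And the coefficient of $q_{1^{(k+1)(N+1)}}$ in $G$ can only arise by multiplying together the $q_{1^{N+1}}$-terms of the $k+1$ factors — any other selection of terms yields a $q$-monomial of smaller total degree or with strictly fewer parts — so, since each of those terms has coefficient $1$, the product does too. Therefore $G$ is a sufficiently monic identity for $X$ over $\Lambda^{(d^+)}$, which is the desired induction step. The main obstacle, to restate it, is the passage in the first paragraph from $f$ to the honest relation $tr(X\cdot f)$; after that, the proof is the routine bookkeeping that multiplication by $X$ followed by trace, the $g_i$-substitution, and products of polynomials each preserve sufficient monicity.
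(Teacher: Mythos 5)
Your proof is correct and takes essentially the same route the paper intends: convert the mixed identity $f$ into the pure-trace identity $tr(X\cdot f)$ via Lemma~\ref{lem:3.3}, apply the $g_i$-substitution to that (using the preceding lemmas for sufficient monicity and the vanishing $g_i(x_i)=0$), and conclude for the product via Lemma~\ref{lem:3.4}. The paper states the theorem without writing out a proof, and your write-up supplies exactly the bookkeeping its preceding lemmas are designed for, including the key point that one must pass to an honest pure-trace relation before substituting $(\hat{d}_i)$.
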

This proves Conjecture~1.  Conjecture~2 follows from
multilinearization:
\begin{proof}[Proof of Conjecture 2] Theorem \ref{th:4.5} implies
that the generic diagonal
polynomial $X$ will satisfy a identity of the form
$$tr(X)^N+\mbox{terms involving fewer traces}=0.$$
We can now multilinearize:  Let $X=c_1y_1+\cdots+
c_Ny_n$ and take the coefficient of $c_1\cdots c_N$,
getting
$$tr(y_1)\cdots tr(y_N)+\mbox{terms involving fewer traces}=0.$$
\end{proof}

\end{document}